\newtheorem{lemma}{Lemma}[section]
\newtheorem{corollary}[lemma]{Corollary}
\newtheorem{theorem}[lemma]{Theorem}
\newtheorem{proposition}[lemma]{Proposition}
\newcommand{\R}{\mathbb{R}}
\newcommand{\C}{\mathbb{C}}
\newcommand{\N}{\mathbb{N}}
\author[A. Gasull]{Armengol Gasull}
\address{Dept. de Matematiques.
Universitat Autonoma de Barcelona. Edifici C. 08193 Bellaterra,
Barcelona. Spain} \email{gasull@mat.uab.cat}
\author[H. Giacomini]{Hector Giacomini}
\address{Laboratoire de Math\'{e}matiques et Physique Th\'{e}orique.
 Facult\'{e} des  Sciences et Techniques. Universit\'{e} de Tours, C.N.R.S. UMR 7350.
37200 Tours. France} \email{Hector.Giacomini@lmpt.univ-tours.fr}
\subjclass[2010]{ Primary 35C-07; Secondary 34C37, 37C29}
\keywords{Partial differential equation, traveling wave,
Fisher-Kolmogorov equation, polynomial differential equation,
heteroclinic orbit, algebraic invariant solution}
\date{}
\dedicatory{} \commby{}
\begin{document}

\title
[Explicit traveling waves and invariant algebraic curves]{Explicit
traveling waves\\ and invariant algebraic curves}
\begin{abstract}
In this paper we introduce a precise definition of algebraic
traveling wave solution for general $n$-th order partial
differential equations. All examples of explicit traveling waves
known by the authors fall in this category. Our main result proves
that algebraic traveling waves exist if and only if  an associated
$n$-dimensional first order ordinary differential system has some
invariant algebraic curve. As a paradigmatic application we prove
that, for the celebrated Fisher-Kolmogorov equation, the only
algebraic traveling waves solutions are the ones found in 1979 by
Ablowitz and Zeppetella.  To the best of our knowledge, this is the
first time that this type of results have been obtained.
\end{abstract}

\maketitle

\section{Introduction and Main Results}

Mathematical modelling of dynamical processes in a great variety of
natural phenomena leads in general to non-linear partial
differential equations. There is a particular class of solutions for
these non-linear equations that are of considerable interest. They
are the traveling wave solutions \cite{GilKers,GriSchi,Gri,RodMiu}.
Such a wave  is a special solution of the governing equations, that
may be localised or periodic, which does not change its shape and
which propagates at constant speed.  In the case of linear equations
the profile is usually arbitrary.  In contrast, a non-linear
equation will normally determine a restricted class of profiles, as
the result of a balance between nonlinearity and dissipation. These
waves appear in fluid dynamics \cite{Johnn,Kund}, chemical kinetics
involving reactions \cite{GilKers,Lieh}, mathematical biology
\cite{Gri,Mur}, lattice vibrations in solid state physics
\cite{Maf},
 plasma physics and laser theory \cite{InfRow}, optical fibers \cite{AA}, etc.
In these systems the phenomena of dispersion, dissipation,
diffusion, reaction and convection are the fundamental physical
common facts.

There is an increasing interest in finding explicit exact solutions
for these traveling waves. There are several standard methods for
obtaining such solutions, as the inverse scattering transformation
\cite{AblCla,DraJoh}, the Backlund transformation
\cite{AblCla,DraJoh},  the Painlev\'{e} method \cite{Gor} and the
Hirota's bilinear method \cite{Johnn}.

The inverse scattering transformation is a non-linear analog of the
Fourier transform used for solving linear equations. This method
allows certain non-linear problems, called integrable, to be treated
by what are essentially linear methods.

The Backlund transformation allows to find solutions to a non-linear
partial differential equation from either a known solution to the
same equation or from a solution to another equation. This can
enable  one to find more complex solutions from a simple one, e.g. a
multi-soliton solution from a single soliton solution.

The Painlev\'{e} method is a procedure to detect integrable differential
equations. The Lie group method is applied to a partial differential
equation for finding group-invariant solutions that satisfy ordinary
differential equations. Then the Painlev\'{e} property is tested for
these reduced equations. An ordinary differential equation is said
to have the Painlev\'{e} property if the general solution has no movable
critical singularities. Movable refers to the arbitrary position of
the solution's singularities in complex time. For any solution the
presence and position of movable singularities is given by the
initial conditions. The other type of singularities that can be
found are fixed singularities.

The Hirota's direct method is employed for constructing
multi-soliton solutions to integrable non-linear evolution
equations. The method is based  on introducing a transformation into
new variables, so that in these new variables multi-soliton
solutions appear in a particularly simple form. In fact they appear
as polynomials of simple exponentials in the new variables. This
transformation requires sometimes the introduction of new dependent
and sometimes even independent variables. Expressed in the new
variables the equation will be quadratic in the dependent variables
(the so-called Hirota's bilinear form) and the derivatives must only
appear in combinations that can be expressed using Hirota's
differential operator.

We consider in this work general $n$-th order partial differential equations of
the form
\begin{equation}\label{eq:0}
\frac{\partial^n u}{\partial x^n}=F\Big(u, \frac{\partial
u}{\partial x},\frac{\partial u}{\partial t},\frac{\partial^2
u}{\partial x^2},\frac{\partial^2 u}{\partial x\partial
t},\frac{\partial^2 u }{\partial t^2},\ldots,\frac{\partial^{n-1}u
}{\partial x^{n-1}},\frac{\partial^{n-1} u }{\partial
x^{n-2}\partial t}, \ldots, \frac{\partial^{n-1} u}{\partial
x\partial t^{n-2}},\frac{\partial^{n-1} u}{\partial t^{n-1}}\Big),
\end{equation}
where $x$ and $t$ are real variables and $F$ is a smooth map. The
traveling wave solutions (TWS) of~\eqref{eq:0} are particular
solutions of the form $u=u(x,t)=U(x-ct)$,  where $U(s)$ satisfies
the boundary conditions
\begin{equation}\label{eq:ab}
\lim_{s\to-\infty} U(s)=a\quad\mbox{and}\quad \lim_{s\to\infty}
U(s)=b,
\end{equation}
 where $a$ and $b$ are solutions, not necessarily different, of
$F(u,0,\ldots,0)=0$.
 Plugging $u(x,t)=U(x-ct)$  into~\eqref{eq:0} we get that $U(s)$ has
to be a solution, defined for all $s\in\R$,  of the $n$-th
order ordinary differential equation
\begin{multline}\label{edon}
U^{(n)}=F\big(U,U',-cU',U'',-cU'',c^2U'',\ldots,\\U^{(n-1)},-cU^{(n-1)},\ldots
,(-c)^{n-2}U^{(n-1)}, (-c)^{n-1}U^{(n-1)}\big),
\end{multline}
 where $U=U(s)$ and the derivatives  are taken with
respect to $s$. The  parameter $c$ is called the {\it speed} of the
TWS.

We remark that although in this paper we restrict our attention to
TWS associated to only one partial differential equation and
$x\in\R$, our approach can be extended to systems of partial
differential equations, with ${\bf u}\in\R^d$ and ${\bf x}\in\R^m$.
In this situation,  we would search for TWS of the form $u_j({\bf
x},t)=U_j({\bf k}\cdot{\bf x}-ct)$, $j=1,\ldots, d$, for some  ${\bf
k}\in\R^m$ and  $c\in\R.$

\vspace{0.2cm}

\noindent{\bf Definition.} We will say that $u(x,t)=U(x-ct)$ is an
{\it algebraic TWS}\, if $U(s)$ is a non constant function that
satisfies \eqref{eq:ab} and \eqref{edon} and   there exists a
polynomial $p\in\R[z,w]$ such that $p(U(s),U'(s))=0$.

\vspace{0.2cm}

All the explicit TWS known by the authors are algebraic when $F$ is
a polynomial.  Let us present several well-known examples.

 Consider at first the Burgers equation
\begin{equation*}
\frac{\partial u}{\partial t}+u\frac{\partial u}{\partial x}-a\frac{\partial^2 u}{\partial x^2}=0,
\end{equation*}
where $a\ne0$ is an arbitrary constant. This equation appears in the
 modeling of acoustic and hydrodynamic waves, gas dynamics and
traffic flow (see \cite{PoZai}) and has the one-parametric family of
solutions
\begin{equation*}
u(x,t)=c\Big(1-\tanh\big(\frac{c}{2a} (x-ct)\big)\Big),
\end{equation*}
where $c$, the speed of the wave, is an arbitrary constant. For this
case  $p(U,U')=2aU'+(2c-U)U.$

 The famous Korteweg-de Vries
equation
\begin{equation*}
 \frac{\partial u}{\partial t}-6u\frac{\partial u}{\partial x}+\frac{\partial^3 u}{\partial x^3}=0
\end{equation*}
appears in several domains of physics, non-linear mechanics, water
waves, etc (see \cite{AblCla,DraJoh,knob,PoZai}). It has a
one-parametric family of solutions given by
\begin{equation*}
 u(x,t)=\frac{-c}{2\cosh^2(\frac{\sqrt{c}}{2}(x-ct))},
\end{equation*}
where $c$ is an arbitrary positive parameter. For this second
example  $p(U,U')=(U')^2-(c+2U)U^2.$

 Consider
now the Boussinesq equation
\begin{equation*}
 \frac{\partial^2 u}{\partial t^2}+u\frac{\partial^2 u}{\partial x^2}
 -\frac{\partial^2 u}{\partial x^2}+\Big(\frac{\partial u}{\partial x}\Big)^2
-\frac{\partial^4 u}{\partial x^4}=0.
\end{equation*}
This equation describes surface water waves (see \cite{AblCla,Johnn}) and has the two-parametric family of solutions
\begin{equation*}
 u(x,t)=(1-8k^2-c^2)+12k^2\tanh^2(k(x-ct)),
\end{equation*}
where $k$ and $c$ are arbitrary constants. Here we have
\begin{align*}
p(U,U')=& 3(U')^2 -{U}^{3}-3  \left( c-1 \right)  \left( 1+c \right)
{U}^{2}-3
 \left( {c}^{2}-1+4 {k}^{2} \right)  \left( {c}^{2}-1-4 {k}^{2}
 \right) U\\&- \left( {c}^{2}-1+8 {k}^{2} \right)  \left( {c}^{2}-1-4 {
k}^{2} \right) ^{2}.
\end{align*}

We consider now the so-called improved modified Boussinesq equation
\begin{equation*}
 \frac{\partial^2 u}{\partial t^2}-u\frac{\partial^2 u}{\partial x^2}-
 \frac{\partial^2 u}{\partial x^2}-\Big(\frac{\partial u}{\partial x}\Big)^2
-\frac{\partial^4 u}{\partial x^2 \partial t^2}=0.
\end{equation*}
This equation appears in the modeling of non-linear waves in a
weakly dispersive medium
 (see for instance \cite{KanNish}) and has a three-parametric family of  TWS given by
\begin{equation*}
 u(x,t)=c^2-1+4c^2k^2-8c^2mk^2+12c^2mk^2\operatorname{cn}^2(k(x-ct),m),
\end{equation*}
where $c$, $k$ and $m$ are arbitrary constants and
$\operatorname{cn}(x,m)$ is the Jacobi elliptic function of
 elliptic modulus $m$ that reduces to $\cos(x)$ when $m=0$.
  In this equation, this family of traveling waves and many others
have been found in \cite{ZonHon}.  For this case
\begin{align*}
p(U,U')=& 3c^2(U')^2 +{U}^{3}+3  \left( 1-c^2 \right)  {U}^{2}\\&+
\left( 48{c}^{4} \left(m- {m}^{2}-1 \right) {k}^{4}+3\left(1- c^2
 \right) ^{2} \right) U\\&+64{c}^{6} \left( -1+2m \right)  \left( m+1 \right)  \left( m-2
 \right) {k}^{6}\\&+48{c}^{4}  \left(
 1-c^2\right)\left(m- {m}^{2}-1\right)  {k}^{4}+ \left( 1-c^2 \right)
 ^{3}.
\end{align*}

Notice also that the class of  TWS given by $U(s)=q(e^{\lambda s})$
for some real number $\lambda\ne0$ and some rational function
$q\in\R(z)$, that are usually obtained with the so-called
exp-function method (\cite{J}), are always algebraic TWS. In this
case $U'(s)=\lambda q'(e^{\lambda s})$. Write
$U(s)={q_1(z)}/{q_2(z)},$ and $U'(s)={q_3(z)}/{q_4(z)},$ with
$z=e^{\lambda s},$ for some polynomials $q_j\in\R[z],j=1,\ldots,4.$
Then,  define
\[
p(U,U')=\mbox{Res}\big(q_2(z)U-q_1(z),q_4(z)U'-q_3(z),z\big),
\]
where  $\mbox{Res}(M(z),N(z),z)$ denotes the resultant of the
polynomials $M$ and $N$ with respect to $z;$ see~\cite[p.45]{St}.
Then, clearly $p(U(s),U'(s))=0$ for some polynomial $p$, as we
wanted to see.

It is known that the  TWS  correspond to homoclinic ($a=b$) or
heteroclinic ($a\ne b$) solutions of an associated $n$-dimensional
system of ordinary differential equations, see also the proof of
Theorem~\ref{t0:main}. In many cases,  the critical points where
these invariant manifolds start and end are hyperbolic. When $F$ is
regular  we get, using for instance normal form theory, that in a
neighborhood of each of these points, this manifold can be
parameterized as $\varphi(e^{\lambda s})$, for some smooth function
$\varphi$,  where $\lambda$ is one of the eigenvalues of the
critical points.  This fact, together with the above list of
examples, motivate our definition of algebraic~TWS.

Our main result, which is  proved in Section~\ref{proof}, is:

\begin{theorem}\label{t0:main} The partial differential
equation~\eqref{eq:0} has an algebraic  traveling wave solution with
speed $c$ if and only if the first order  differential system

\begin{equation}\label{system}
\left\{
\begin{array}{ccl}
y_1'&=&y_2, \\
y_2'&=&y_3,\\
\vdots&&\vdots\\
y_{n-1}'&=&y_{n},\\
y_n'&=& G_c(y_1,y_2,\ldots,y_{n}),
\end{array}
\right.
\end{equation}
where
\begin{multline*}
G_c(y_1,y_2,\ldots,y_{n})=F(y_1,y_2,-cy_2,y_3,-cy_3,c^2y_3,\ldots,\\y_{n},-cy_{n},\ldots
,(-c)^{n-2}y_{n},(-c)^{n-1}y_{n}), \end{multline*} has an invariant
algebraic curve containing the critical points $(a,0,\ldots,0)$ and
$(b,0,\ldots,0)$ and no other critical points between them.
\end{theorem}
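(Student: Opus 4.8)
The plan is to translate the statement about TWS of \eqref{eq:0} directly into a statement about trajectories of system \eqref{system}, and then to reconcile the notion of ``algebraic curve'' in the definition of algebraic TWS (a single polynomial relation $p(U,U')=0$) with the notion of ``invariant algebraic curve'' of an $n$-dimensional polynomial-like system (which a priori requires a subvariety of $\R^n$, not of $\R^2$). First I would record the elementary observation that, setting $y_1=U, y_2=U', \ldots, y_n=U^{(n-1)}$, equation \eqref{edon} is exactly equivalent to system \eqref{system}: a function $U(s)$ defined for all $s$ solves \eqref{edon} if and only if $(y_1,\ldots,y_n)(s)=(U,U',\ldots,U^{(n-1)})(s)$ is a trajectory of \eqref{system}. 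The boundary conditions \eqref{eq:ab} and the fact that $a,b$ solve $F(u,0,\ldots,0)=0$ say precisely that this trajectory is a solution connecting the critical points $(a,0,\ldots,0)$ and $(b,0,\ldots,0)$, i.e. a homoclinic or heteroclinic orbit. Moreover a non-constant $U$ cannot have the orbit pass through any other critical point, since a critical point of \eqref{system} has the form $(a^*,0,\ldots,0)$ with $F(a^*,0,\ldots,0)=0$ and the orbit would have to stay there for all time by uniqueness — so I would spell out why ``no other critical points between them'' is automatic for a genuine connecting orbit and conversely is part of what an invariant algebraic curve joining the two points must provide.

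For the forward implication, assume $u(x,t)=U(x-ct)$ is an algebraic TWS, so there is $p\in\R[z,w]$, $p\not\equiv 0$, with $p(U(s),U'(s))=0$ for all $s$. The candidate invariant algebraic curve is
\[
\mathcal{C}=\{(y_1,\ldots,y_n)\in\R^n : p(y_1,y_2)=0\},
\]
i.e. the cylinder over the plane curve $p(z,w)=0$. I would check that $\mathcal{C}$ is invariant for \eqref{system}: differentiating $p(U,U')\equiv 0$ gives $p_z(U,U')U' + p_w(U,U')U'' \equiv 0$, and since $y_1'=y_2, y_2'=y_3$ on $\mathcal{C}$ this says $p_z\, y_2 + p_w\, y_3$ vanishes on the orbit; to get genuine invariance of the whole variety one argues that the polynomial $p_z(y_1,y_2)y_2+p_w(y_1,y_2)y_3$ (the derivative of $p$ along the vector field) vanishes on all of $\mathcal{C}$, which follows because it vanishes on the Zariski-dense set swept out by the orbit in each irreducible component that the orbit meets — here I would either invoke irreducibility of the relevant factor of $p$ (replacing $p$ by the minimal-degree polynomial annihilating $(U,U')$, which is a single irreducible factor once we restrict to the component containing the orbit), or simply define the invariant algebraic curve to be that irreducible component. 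Then $\mathcal{C}$ contains $(a,0,\ldots,0)$ and $(b,0,\ldots,0)$ by continuity (the limits in \eqref{eq:ab}, together with $U'\to 0$, which holds at a hyperbolic-type equilibrium and more generally can be extracted from the ODE) and contains no other critical point between $a$ and $b$ along the orbit by the uniqueness argument above.

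For the converse, suppose \eqref{system} has an invariant algebraic curve (a one-dimensional invariant algebraic set) $\mathcal{C}\subset\R^n$ through $P_a=(a,0,\ldots,0)$ and $P_b=(b,0,\ldots,0)$ with no other critical point on the portion between them. I would use invariance to produce a connecting orbit: the segment of $\mathcal{C}$ between $P_a$ and $P_b$, being invariant, one-dimensional, and free of equilibria in its interior, is a single trajectory $\gamma(s)=(y_1(s),\ldots,y_n(s))$ with $\gamma(s)\to P_a$ as $s\to-\infty$ and $\gamma(s)\to P_b$ as $s\to+\infty$ (or the reverse; replace $s$ by $-s$ if needed and note this corresponds to changing the sign of $c$, or handle it directly). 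Setting $U(s)=y_1(s)$, the structure of \eqref{system} forces $y_j=U^{(j-1)}$, so $U$ solves \eqref{edon} and satisfies \eqref{eq:ab}. Finally, $U$ is an algebraic TWS: project $\mathcal{C}$ to the $(y_1,y_2)$-plane; its Zariski closure is a plane algebraic curve cut out by some $p\in\R[z,w]$ (obtained by eliminating $y_3,\ldots,y_n$ from the ideal of $\mathcal{C}$, e.g. via resultants), and $p(U(s),U'(s))=p(y_1(s),y_2(s))=0$ for all $s$. One must check $p\not\equiv 0$; this holds because $\mathcal{C}$ is one-dimensional, so its image in the plane is at most one-dimensional, hence a proper subvariety.

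I expect the main obstacle to be the bookkeeping around invariance and irreducibility: passing cleanly between ``the polynomial relation holds along the orbit'' and ``the algebraic variety is invariant under the flow'', and making precise the elimination step in the converse (that the $(y_1,y_2)$-projection of an invariant algebraic curve is again algebraic and not all of $\R^2$). A secondary technical point is justifying $U^{(j)}(s)\to 0$ as $s\to\pm\infty$ so that the limit points of the orbit are exactly $(a,0,\ldots,0)$ and $(b,0,\ldots,0)$ rather than merely having first coordinate $a$ or $b$; this can be dealt with by a standard argument showing that a bounded orbit of a smooth ODE converging in its first coordinate must converge to an equilibrium, or by directly analyzing \eqref{system}. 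The orientation issue ($s\mapsto -s$ versus $c\mapsto -c$) is minor and I would dispose of it with a remark.
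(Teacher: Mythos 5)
Your converse is essentially the paper's argument: the invariant algebraic curve lies, by definition, in the intersection of $n-1$ functionally independent algebraic hypersurfaces $q_j(y_1,\ldots,y_n)=0$, and eliminating the higher variables (the paper does this by successive resultants, you by projecting to the $(y_1,y_2)$-plane and taking the Zariski closure) yields a nontrivial $p\in\R[z,w]$ with $p(U,U')\equiv 0$. The forward direction, however, has a genuine gap for $n\ge 3$. The theorem is stated with the paper's definition of an invariant algebraic curve of the $n$-dimensional system~\eqref{system}: a flow-invariant \emph{curve} contained in the intersection of $n-1$ functionally independent codimension-one algebraic varieties (the varieties themselves need not be invariant). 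Your candidate $\mathcal{C}=\{p(y_1,y_2)=0\}\subset\R^n$ is a single cylinder of dimension $n-1$, not a curve, so exhibiting it does not establish the conclusion; and your argument for its invariance collapses at the same point, because the orbit $\big(U,U',\ldots,U^{(n-1)}\big)(s)$ is one-dimensional and therefore is \emph{not} Zariski-dense in any irreducible component of an $(n-1)$-dimensional variety when $n\ge 3$, so the vanishing of $p_z(y_1,y_2)y_2+p_w(y_1,y_2)y_3$ along the orbit does not propagate to all of $\mathcal{C}$. (For $n=2$ both objections vanish and your argument is correct.)

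The missing idea, which is the heart of the paper's forward direction, is to manufacture the remaining $n-2$ hypersurfaces by differentiating the relation along the flow: set $p_1:=p$ and
\[
p_{j+1}(y_1,\ldots,y_{j+2}):=\sum_{i=1}^{j+1}\mathcal{D}_i\,p_j(y_1,\ldots,y_{j+1})\,y_{i+1},\qquad j=1,\ldots,n-2,
\]
so that $p_j\big(U,U',\ldots,U^{(j)}\big)\equiv 0$ for every $j$. Each $p_j$ genuinely involves the new variable $y_{j+1}$ (inductively, because $p$ must depend on its second variable, as otherwise $U$ would be constant), which gives functional independence, and the orbit --- which is flow-invariant tautologically --- lies in $\bigcap_{j=1}^{n-1}\{p_j=0\}$. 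This is exactly the structure the definition demands, and note that no invariance of the individual hypersurfaces is needed, so your effort in that direction can be dropped. The remainder of your write-up (identifying TWS with connecting orbits, the role of the critical points, and the care taken with $U^{(j)}\to 0$ and with nontriviality of the eliminant) is sound and in places more detailed than the paper itself.
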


Recall that, as usual, we will say that a differential system has an
invariant algebraic curve $C$ if this curve is invariant by the flow
and moreover it is contained in the intersection of $n-1$
functionally independent  algebraic varieties of codimension one. We
remark that these varieties do not need to be necessarily invariant
by the flow of the system.

 When $F$  is a polynomial, the condition for the
existence of an algebraic TWS is that a certain polynomial
differential system must have an algebraic invariant curve. The
problem of determining necessary conditions for the existence of
algebraic invariant curves for polynomial differential systems goes
back to the work of Poincar\'{e}. This problem   has been extensively
investigated in the last years for the case $n=2$, see for instance
\cite{maite,DuLlAr,llibre} and references therein, but for $n>2$ the
research is only beginning, see for instance \cite{GGG,llibrezhang}.
As a consequence, for second order partial differential equations of
the form~\eqref{eq:0}, our result translates the question of the
existence of algebraic TWS to a related problem for which many tools
are available.

We remark that explicit TWS have also been searched   for by using
several direct methods, such as the exp-function method and the
tanh-function method and its variants, see for instance~\cite{J,
Maf, MafHer1, MafHer2, ZonHon}. These methods are essentially based
on the following idea: fix a class of functions with several free
parameters and then impose conditions on the parameters to find some
particular cases satisfying the corresponding equations. For
instance, the four examples of algebraic TWS given above can be
obtained by applying these direct methods.

On the contrary, our approach gives necessary and sufficient
conditions for a partial differential equation to have explicit
algebraic TWS. To the best of our knowledge, this is the first time
that this type of results have been obtained. As a paradigmatic
example, we  apply our method to the celebrated Fisher-Kolmogorov
reaction-diffusion partial differential equation
\begin{equation}\label{eq:1}
\frac{\partial u}{\partial t}=\frac{\partial^2 u}{\partial x^2}+
u\,(1-u),
\end{equation}
introduced in 1937 in the classical papers~\cite{Fisher, Kolmo}  to
model the spreading of biological populations;  see also \cite{GGT}
for some recent results. For this equation $a=1$ and $b=0$
 in~\eqref{eq:ab}. Moreover, from~\cite{Fisher,Kolmo}, it is also
known that the traveling waves only exist for $c\ge2.$ We prove:

\begin{theorem}\label{t:main} The Fisher-Kolmogorov
equation~\eqref{eq:1} has algebraic traveling wave solutions only
when the speed is $c=5/\sqrt{6}$ and  they are the ones given by
Ablowitz and Zeppetella (\,\cite{AZ}):
\[
u(x,t)=\frac{1}{\left(1+ k e^{\frac{1}{\sqrt
6}\,\left(x-\frac5{\sqrt{6}}t\right) }\right)^2},\quad k>0.
\]
\end{theorem}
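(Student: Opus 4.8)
The plan is to apply Theorem~\ref{t0:main} to equation~\eqref{eq:1} and reduce the existence of algebraic TWS to a concrete question about invariant algebraic curves of a planar polynomial system. Here $n=2$ and $F(u,u_x,u_t)=u_t-u(1-u)$, so the reduced ODE is $U''=-cU'-U(1-U)$, and the associated first order system~\eqref{system} becomes
\begin{equation*}
y_1'=y_2,\qquad y_2'=-cy_2-y_1(1-y_1),
\end{equation*}
with critical points $(0,0)$ and $(1,0)$. By Theorem~\ref{t0:main}, the Fisher--Kolmogorov equation has an algebraic TWS with speed $c$ if and only if this system has an invariant algebraic curve passing through both $(0,0)$ and $(1,0)$ and containing no other critical point between them. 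So the whole problem becomes: for which $c$ does this quadratic vector field admit such an invariant algebraic curve, and what is it?

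The key step is therefore to classify the invariant algebraic curves of the quadratic system above. I would write a candidate curve as $p(y_1,y_2)=0$ with $p\in\R[y_1,y_2]$ and impose the invariance condition $y_2\,\partial_{y_1}p+\bigl(-cy_2-y_1(1-y_1)\bigr)\partial_{y_2}p=K(y_1,y_2)\,p$ for a polynomial cofactor $K$; for a quadratic system the cofactor has degree at most $1$. The standard approach is to bound the degree of $p$ and solve the resulting polynomial identity. One expects — and this is where one leans on the structure of the Ablowitz--Zeppetella solution — that the relevant curve is the parabola coming from $U(s)=(1+ke^{\lambda s})^{-2}$, which gives a relation of the form $U'=\alpha U^{3/2}+\beta U$ and hence, after squaring, $p(U,U')=(U')^2-(\alpha U^{3/2}+\beta U)^2$; computing the eigenvalues of the linearization at $(0,0)$ (namely $\lambda_\pm=(-c\pm\sqrt{c^2-4})/2$) and demanding that the orbit be parameterized by a single exponential $e^{\lambda s}$ forces a resonance/rationality condition on $c$, and one checks this pins down $c=5/\sqrt6$. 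I would carry out the degree analysis to show that no invariant algebraic curve of any degree joins the two critical points for other values of $c$.

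Concretely the steps are: (i) write down the planar system and its cofactor equation; (ii) analyze the linear parts at $(0,0)$ and $(1,0)$ to see which invariant curves through these points are even possible, using the eigenvalue ratio to detect when a local analytic (hence algebraic) branch exists; (iii) show that for $c\ne 5/\sqrt6$ any putative invariant algebraic curve through $(0,0)$ cannot be extended to one that also passes through $(1,0)$ without picking up extra critical points or failing the cofactor identity; (iv) for $c=5/\sqrt6$ exhibit the explicit invariant parabola (equivalently the polynomial $p$) and recover the Ablowitz--Zeppetella profile by integrating $U'=\alpha U^{3/2}+\beta U$ along it. The main obstacle is step~(iii): ruling out invariant algebraic curves of arbitrarily high degree. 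The degree bound is not automatic for this vector field, so I expect the heart of the proof to be a careful argument — likely via the behaviour at infinity and at the finite singular points, or via an explicit study of the possible cofactors and the Darboux structure — showing that a heteroclinic-carrying invariant algebraic curve can only occur at the single resonant speed.
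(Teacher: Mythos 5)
Your reduction via Theorem~\ref{t0:main} to the planar system $y_1'=y_2$, $y_2'=-cy_2-y_1(1-y_1)$ is exactly the paper's first step, and your step~(iv) (integrating along the invariant conic to recover the Ablowitz--Zeppetella profile) also matches. But your step~(iii) --- ruling out invariant algebraic curves of arbitrary degree for $c\ne 5/\sqrt6$ --- is precisely the heart of the proof, and you have only announced it as ``the main obstacle'' with a list of techniques that might work, rather than carrying it out. That is a genuine gap: without it you have verified only that the Ablowitz--Zeppetella wave is algebraic, not the uniqueness claim that is the actual content of the theorem.

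For comparison, the paper closes this gap (Theorem~\ref{teo:iac}, proved in Section~\ref{qqss}) as follows. After translating the saddle to the origin ($x=1-y_1$), one writes a putative irreducible invariant curve as $f_n(x,y)=\sum_j h_j(x)y^j$ and shows first that the cofactor must be a constant $c_0$ and the degree $n=2m$ even (Proposition~\ref{pro:odd}). Then a recurrence for the leading coefficients of the $h_j$, resolved in closed form via Gamma-function identities, yields the exact constraint $5c_0+6mc=0$ valid for \emph{every} degree $2m$ (Proposition~\ref{pro:h0h1}). Finally, a Seidenberg-type result (Proposition~\ref{cof}) forces $c_0\in\{\lambda^+,\lambda^-,\lambda^++\lambda^-\}$ at the hyperbolic saddle, and combining this with $5c_0+6mc=0$ and the constraint $c\ge 2$ leaves only $m=1$, $c_0=\lambda^-$, $c=5/\sqrt6$. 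Note also that your heuristic in step~(ii) focuses on the eigenvalues $(-c\pm\sqrt{c^2-4})/2$ at the node $(0,0)$ and a ``resonance'' condition there; the decisive local information in the paper comes instead from the saddle $(1,0)$, whose eigenvalues are $(-c\pm\sqrt{c^2+4})/2$, and from the value of the cofactor there rather than from any analyticity-of-branches argument. So the speed $5/\sqrt6$ does not drop out of a resonance at the node in the way you suggest; it comes from the interplay between the global coefficient identity $5c_0+6mc=0$ and the admissible cofactor values at the saddle.
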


These explicit TWS have been found by applying the Painlev\'{e} method;
 see \cite{Gor} for an introduction to this method.

 Notice that
for~\eqref{eq:1},  the above function is   an algebraic TWS, because
the corresponding $U(s)$ satisfies
\[
p(U,U')=3(U')^2+2\sqrt{6}U U' +2(1-U)U^2=0.
\]
 We remark that
this family of TWS only exists for a fixed value of the speed $c$,
while for the other examples given above
 the speed $c$ is arbitrary.  This can also  be seen
 in the corresponding  associated systems~\eqref{system}, because in
 all these cases, for all values of $c$, the system possesses an
 invariant algebraic curve. In fact, in the first two  equations
 (Burgers and Korteweg-de Vries) all the solutions of the vector
 fields are contained in algebraic curves.

 Another family exhibiting
 algebraic TWS for a given speed $c$ is presented in Section
 \ref{se:3}. It includes the so-called  Nagumo equation; see~\cite{Mur1}.

 Our approach can be applied to characterize the existence of
algebraic TWS for many other polynomial partial differential
equations, like for instance the Newell-Whitehead-Segel
equation(\cite{NewWhi,seg}), the Zeldovich equation(\cite{ZelFra})
or  some of the equations considered
in~\cite{Gor,Gri,SanMai1995,SanMai1997,Xin}.

\section{Proof of Theorem~\ref{t0:main}}\label{proof}

In this section we prove Theorem~\ref{t0:main} and   give some of
its consequences.   Furthermore,   we introduce an algebraic
characterization of the  planar invariant algebraic curves.

\begin{proof}[Proof of Theorem~\ref{t0:main}]

Assume first that  the partial differential equation~\eqref{eq:0}
has an algebraic TWS, $u(x,t)=U(x-ct)$, with $p(U(s),U'(s))=0$ for
some polynomial $p$. For the sake of notation we define $p_1:=p$ and
\[
p_2(U(s),U'(s),U''(s)):=\mathcal{D}_1\,p_1(U(s),U'(s))\,U'(s)+\mathcal{D}_2\,p_1(U(s),U'(s))\,U''(s),\]
where $\mathcal{D}_1$ and $\mathcal{D}_2$ indicate
 partial derivatives with respect to the first and second variables of
 $p_1(U,U')$, respectively, and $p_2\in\R[u,v,w]$. Notice that  since
 $p_1(U(s),U'(s))=0$ it holds that
$p_2(U(s),U'(s),U''(s))=0$.
 Doing successive  derivatives we
 obtain $n-3$ new polynomials $p_j$, $j=3,\ldots,n-1$, for which
 \[
p_j(U(s),U'(s),U''(s),\ldots, U^{(j)}(s))=0.
 \]
Using all the above equalities, and  the fact that $U$ gives a TWS,
we obtain that the vector function
\[(y_1(s),y_2(s),\ldots,y_n(s))=\big( U(s),U'(s),\ldots, U^{(n-1)}(s)\big)\]
is a parametric representation of a curve $\mathcal C$ in the phase
space of the system~\eqref{system} associated to~\eqref{eq:0}. In
fact, $C$ is an algebraic curve, because it is contained in the
intersection of the $n-1$ functionally independent algebraic
hypersurfaces $p_j(y_1,\ldots,y_j)=0, j=1,2\ldots,n-1,$ that is,
\[
\mathcal{C}\subset\bigcap_{j=1}^{n-1}\,\big\{p_j(y_1,\ldots,y_j)=0\big\}.
\]
As $U$ satisfies~\eqref{eq:ab} the   system has no critical points
on this curve between $(a,0,\ldots,0)$ and $(b,0,\ldots,0)$. Hence
the first part of the theorem follows.

Assume, to prove the converse implication,  that
system~\eqref{system} has an algebraic invariant curve. Let
\[
{\bf y}(s)=\big(U(s),U'(s),U''(s),\ldots, U^{(n-1)}(s)\big)
\]
be the solution of system~\eqref{system} associated to this curve
and joining the critical points  $(a,0,\ldots,0)$ and
$(b,0,\ldots,0)$.
 By definition, this curve  is included in the intersection of
$n-1$ codimension one functionally independent algebraic
hypersufaces $q_j(y_1,y_2,\ldots,y_n)=0$, $j=1,2,\ldots,n-1.$
Therefore,  $U(s)$ must satisfy the $n-1$ polynomial differential
equations
\[
q_j(U(s),U'(s),\ldots, U^{(n-1)}(s))=0,\quad j=1,2,\ldots, n-1.
\]
Doing successive resultants, we obtain that $U$ satisfies all the
resulting lower order polynomial differential equations. This
procedure  arrives to a polynomial first order equation
$q(U(s),U'(s))=0$. This equation proves that the TWS is algebraic.
\end{proof}

 In view of  our  result, we  need a method to detect when a
polynomial system of ordinary differential equations has algebraic
invariant curves to determine whether  some polynomial partial
differential equation can have algebraic TWS.

Although, as we have already explained in the introduction, there
are some works dealing with this problem in the $n$-dimensional
setting~\cite{GGG,llibrezhang}, the planar case  is the most
developed one.

Consider a planar differential system,
\begin{equation}\label{gene}
\left\{
\begin{array}{ccl}
 x'&=&P(x,y), \\
y'&=&Q(x,y),
\end{array}
\right.
\end{equation}
where $P$ and $Q$ are polynomials of degree at most~$N$, and assume
that there is a polynomial $g(x,y)$ such that the set $\{g(x,y)=0\}$
is non-empty and invariant by the flow of~\eqref{gene}. If $g$ is not
irreducible in $\C[x,y]$ then there exist several irreducible
polynomials, $\widetilde g_j, j=1,\ldots,k$, such that for each
$j$, the corresponding set $\{\widetilde g_j(x,y)=0\}$ is also
non-empty and invariant by the flow of the system and
$\{g(x,y)=0\}=\cup_{j=1}^k\{\widetilde g_j(x,y)=0\}$.

For irreducible polynomials we have the following algebraic
characterization of invariant algebraic  curves, which is the one
that we will use in Section~\ref{qqss}. Given an irreducible
polynomial of degree $n$, $f(x,y)$, then $f(x,y)=0$ is an {\it
invariant algebraic curve} for the system if there exists a
polynomial of degree at most $N-1$, $k(x,y)$, called the {\it
cofactor of $f$,} such that
\begin{equation}\label{condi}
P(x,y)\dfrac{\partial f(x,y)}{\partial x}+Q(x,y)\dfrac{\partial
f(x,y)}{\partial y}-k(x,y)f(x,y)=0.
\end{equation}
For a proof of this result see \cite{maite,DuLlAr,llibre}. The above
characterization is also used
 for $n$-dimensional systems to  determine codimension one invariant
 algebraic varieties;  see for instance~\cite{llibrezhang}.
 For finding invariant algebraic curves  the
  cofactor is then exchanged for a $(n-1)\times(n-1)$ matrix of cofactors,
 see~\cite{GGG}.

\section{Proof of Theorem~\ref{t:main} and other
examples}\label{se:3}

Our  proof of Theorem~\ref{t:main} is based on the following result,
which will be proved in  the next section.

\begin{theorem}\label{teo:iac} Consider  the system
\begin{equation}\label{main2}
\left\{
\begin{array}{ccl}
 x'&=&-y, \\
y'&=&-x-c y +x^2,
\end{array}
\right.
\end{equation}
with $c\ge2.$ Assume that  it has an irreducible invariant algebraic
curve that passes through the origin. Then $c=5/\sqrt{6}$ and this
curve is
\begin{equation}\label{iac}
y^2+2\sqrt{\frac23}(1-x)y+\frac23x(1-x)^2=0.
\end{equation}
\end{theorem}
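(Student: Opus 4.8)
The plan is to use the algebraic characterization \eqref{condi} of invariant algebraic curves directly. Suppose $f(x,y)=0$ is an irreducible invariant algebraic curve of degree $m$ for \eqref{main2}, passing through the origin. Then there is a cofactor $k(x,y)=\alpha+\beta x+\gamma y$ (degree at most $N-1=1$, since $P,Q$ have degree $2$) with
\[
-y\,\frac{\partial f}{\partial x}+(-x-cy+x^2)\,\frac{\partial f}{\partial y}=k(x,y)\,f(x,y).
\]
First I would analyze the highest-degree part. Writing $f=f_m+f_{m-1}+\cdots+f_0$ with $f_j$ homogeneous of degree $j$, the top-degree terms on the left come from $x^2\,\partial f_m/\partial y$, which has degree $m+1$, so it must be killed: $\partial f_m/\partial y=0$, forcing $f_m=\lambda x^m$. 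Normalizing, $f_m=x^m$. Then at degree $m$ one balances $x^2\,\partial f_{m-1}/\partial y$ against $(-x-cy)\partial f_m/\partial y + $ (lower contributions) and the top part of $k f$; since $\partial f_m/\partial y=0$, this gives $x^2\,\partial f_{m-1}/\partial y = (\beta x+\gamma y)x^m$ plus the degree-$m$ part of $\alpha f$, i.e. essentially a constraint pinning down $\beta,\gamma$ and the structure of $f_{m-1}$.

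The key step is to extract a recursion. Comparing the coefficient of each monomial order by order, the operator $f\mapsto -y\,\partial_x f+(-x-cy+x^2)\,\partial_y f - k f$ acting on the graded pieces produces a triangular system: the degree-$j$ homogeneous component of the equation relates $f_{j}$ (through the linear part $-y\partial_x - (x+cy)\partial_y - (\alpha+\beta x+\gamma y)\cdot$, evaluated on $f_j$, $f_{j+1}$) to $x^2\partial_y f_{j-1}$. Because the candidate curve \eqref{iac} has degree $2$ and a genuinely irreducible equation over $\R$ only for a special $c$, I expect the resolution to force $m=2$: for $m\ge 3$ the recursion over-determines the coefficients and is inconsistent unless the curve is reducible, contradicting irreducibility. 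Concretely, I would (i) show $f_m=x^m$ and read off $\gamma$ from the degree-$m$ equation; (ii) descend, at each level solving a linear equation for $f_{j}$ whose solvability imposes a condition; (iii) observe that the condition at the passage through low degrees, combined with $f(0,0)=0$ (so $f_0=0$) and the requirement that the origin, a critical point, actually lie on the curve, forces $c^2=25/6$ among $c\ge 2$; (iv) back-substitute to recover \eqref{iac} uniquely up to scalar.

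The main obstacle is controlling the degree: ruling out all $m\ge 3$. I would handle this either by the graded-recursion consistency argument above — tracking how many free parameters enter ($f_j$ has $j+1$ coefficients) versus how many equations the degree-$j$ comparison yields, and showing the system becomes incompatible for $m\ge 3$ unless $f$ factors — or, more cleanly, by passing to the critical points of \eqref{main2}. The system \eqref{main2} has exactly two finite critical points, $(0,0)$ and $(1,0)$; at $(0,0)$ the linearization has eigenvalues $\lambda_\pm=\tfrac{-c\pm\sqrt{c^2-4}}{2}$, whose ratio $\lambda_+/\lambda_-$ is rational only for special $c$, and an irreducible invariant curve through a hyperbolic node/saddle must be tangent to an eigendirection with its local branch controlled by resonances; the Poincaré–Dulac/normal-form obstruction to an algebraic (rather than merely analytic) separatrix pins $c$. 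Either way, once $c=5/\sqrt 6$ is forced and the degree is shown to be $2$, the final identification of \eqref{iac} is a short direct computation: plug $f=y^2+(a+bx)y+(d x + e x^2 + g x^3)$ — already knowing $f_2$ must be $y^2+\cdots$ after rescaling — into \eqref{condi}, match coefficients, and verify the unique solution is \eqref{iac} with the stated cofactor.
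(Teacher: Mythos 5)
Your overall strategy (work directly with the cofactor equation \eqref{condi}, analyze it graded piece by graded piece, and use the critical points to pin down $c$) points in the right direction, but the decisive step is missing. The entire difficulty of the theorem is ruling out invariant curves of \emph{every} degree, and neither of your two proposed mechanisms does this. The parameter-counting heuristic (``the recursion over-determines the coefficients for $m\ge 3$'') is not true in any naive sense: at each degree the graded system is triangular and solvable step by step, and the obstruction is a single scalar compatibility condition that only becomes visible once you compute the \emph{exact leading coefficients} of the pieces of $f$. The paper does this by writing $f=\sum_j h_j(x)y^j$ (graded by the power of $y$, not by total degree), first proving that the cofactor must be a constant $c_0$ and that the degree in $y$ must be even, $n=2m$, and then solving the resulting recursion for the leading coefficients of $h_0$ and $h_1$ in closed form via Gamma-function/Pochhammer identities; the lowest-order equation then forces $5c_0+6mc=0$. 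Combined with the Seidenberg-type restriction (Proposition~\ref{cof}) that the constant cofactor at the saddle at the origin must equal $\lambda^+$, $\lambda^-$ or $\lambda^++\lambda^-$, this gives $c=\mp\frac{5}{\sqrt6}\frac{1}{\sqrt{m(6m-5)}}$, and only $m=1$ is compatible with $c\ge 2$. Nothing in your sketch produces this quantitative relation, and your alternative normal-form idea is not a proof either: a hyperbolic saddle always possesses analytic separatrices regardless of resonance, so the (ir)rationality of $\lambda^+/\lambda^-$ does not by itself obstruct an algebraic invariant curve --- whether the separatrix is algebraic is precisely the hard question.

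There are also two factual slips worth correcting. The origin of \eqref{main2} is a saddle with eigenvalues $\lambda^\pm=\frac{-c\pm\sqrt{c^2+4}}{2}$ (the Jacobian there has determinant $-1$), not $\frac{-c\pm\sqrt{c^2-4}}{2}$; the latter are the eigenvalues at the node $(1,0)$. And the curve \eqref{iac} has total degree $3$ (it is quadratic in $y$ but contains $x^3$), so the goal is not to force total degree $2$; the quantity that ends up being pinned down is the degree in $y$, which must equal $2$.
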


\begin{proof}[Proof of Theorem~\ref{t:main}]
Assume that ~\eqref{eq:1} has an algebraic TWS,
$u(x,t)=U(x-ct)$. By the results of \cite{Fisher,Kolmo} we already
know that $c\ge2.$ Moreover, by Theorem~\ref{t0:main}, the planar
system
\begin{equation*}
\left\{
\begin{array}{ccl}
 y_1'&=&y_2, \\
y_2'&=&-cy_2-y_1(1-y_1),
\end{array}
\right.
\end{equation*}
should have an invariant algebraic curve, $g(y_1,y_2)=0$, containing
the critical points  $(0,0)$ and $(1,0)$.  Moreover, without loss of
generality, we can assume that it is irreducible.

Taking $x=1-y_1$ and $y=y_2$  we obtain  system~\eqref{main2}. Then,
it should also have an irreducible invariant algebraic curve
$f(x,y)=0$, with $f(0,0)=f(1,0)=0.$
By Theorem~\ref{teo:iac} we get that $c=5/\sqrt6$ and $f$ has to be
\begin{equation*}
f(x,y)=y^2+2\sqrt{\frac23}(1-x)y+\frac23x(1-x)^2.
\end{equation*}
The branch of $f(x,y)=0$ that contains the origin is
\[
y=A(1-\sqrt{1-x})(x-1),
\]
where $A:=\sqrt{6}/3.$ Using the first equation of~\eqref{main2},
that in this case is $x'=-y$, we obtain that
\[
x'(s)=A(1-\sqrt{1-x(s)})(1-x(s)).
\]
Returning to the function $U(s)=y_1(s)=1-x(s)$ we get the
differential equation
\[
U'(s)=-A\big(1-\sqrt{U(s)}\big)U(s).
\]
Introducing $W(s)=\sqrt{U(s)}$ we obtain that $W$ satisfies the
logistic differential equation
\[
W'(s)=-\frac A 2\big(1-W(s)\big)W(s).
\]
Its non-constant solutions that are  defined for all $s\in\R$  are
\[
W(s)=\frac 1{1+k e^{\frac A 2s}},\quad k>0.
\]
Hence
\[
U(s)=\frac 1{(1+k e^{\frac{A}2s})^2}=\frac 1{(1+k
e^{\frac{1}{\sqrt{6}}s})^2}
\]
and
\[
u(x,t)=\frac{1}{\left(1+ k e^{\frac{1}{\sqrt
6}\,\left(x-\frac5{\sqrt{6}}t\right) }\right)^2}, \quad k>0,
\]
as we wanted to prove.
\end{proof}

\subsection{A simple family with algebraic TWS}

 In this subsection we consider the family of second order
reaction-diffusion equations
\begin{equation}\label{edp-new}
\frac{\partial u}{\partial t}=-d\, f(u)(f'(u)+r)+d\frac{\partial^2
u}{\partial x^2},
\end{equation}
where $f$ is a polynomial function and $d>0$ and $r$ are real
constants. As we will see, studying its algebraic TWS we recover
some of the results presented in \cite[Ch.11]{Mur1}. In particular
we will find some algebraic TWS for the Nagumo equation, related
with the FitzHugh-Nagumo model for the nerve action potentials.

The  planar system \eqref{system} associated to \eqref{edp-new} is
\begin{equation}\label{pla}
\left\{
\begin{array}{ccl}
 x'&=&y, \\
y'&=&-\dfrac c d y + f(x)(f'(x)+r).
\end{array}
\right.
\end{equation}
It is easy to obtain one invariant algebraic curve for it for some
particular values of the parameters.

\begin{lemma} When $r=c/d$, system \eqref{pla} has the
invariant algebraic curve $y-f(x)=0.$
\end{lemma}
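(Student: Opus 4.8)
The plan is to apply directly the algebraic characterization of invariant algebraic curves recalled in \eqref{condi}. That is, setting $g(x,y):=y-f(x)$, it suffices to exhibit a polynomial cofactor $k(x,y)$ such that $P\,\partial_x g+Q\,\partial_y g-k\,g\equiv 0$, where for system \eqref{pla} we have $P(x,y)=y$ and $Q(x,y)=-\tfrac cd y+f(x)\bigl(f'(x)+r\bigr)$.

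First I would compute $\partial_x g=-f'(x)$ and $\partial_y g=1$, so that
\[
P\,\partial_x g+Q\,\partial_y g=-y\,f'(x)-\tfrac cd\,y+f(x)\bigl(f'(x)+r\bigr).
\]
Then I would impose the hypothesis $r=c/d$ and rearrange: writing $f(x)\bigl(f'(x)+c/d\bigr)=f(x)f'(x)+\tfrac cd f(x)$, the right-hand side becomes $-f'(x)\bigl(y-f(x)\bigr)-\tfrac cd\bigl(y-f(x)\bigr)$, i.e. it factors as $-\bigl(f'(x)+\tfrac cd\bigr)g(x,y)$. Hence \eqref{condi} holds with cofactor $k(x,y)=-f'(x)-c/d$, which is a polynomial; when $\deg f\ge 1$ its degree is $\deg f-1$, admissible since the vector field \eqref{pla} then has degree $2\deg f-1$. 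This gives the lemma.

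There is essentially no obstacle here — the computation is a one-line factorization — so the only thing to be careful about is that the candidate $k$ really is a polynomial cofactor of the appropriate degree, which is immediate. Alternatively, and avoiding \eqref{condi} altogether, one may argue directly: along any solution of \eqref{pla},
\[
\tfrac{d}{ds}\bigl(y-f(x)\bigr)=y'-f'(x)\,x'=-\tfrac cd y+f(x)\bigl(f'(x)+r\bigr)-f'(x)\,y,
\]
and substituting $y=f(x)$ together with $r=c/d$ yields $0$, so the set $\{y=f(x)\}$ is invariant by the flow.
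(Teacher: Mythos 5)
Your proposal is correct and is essentially identical to the paper's proof: both verify the defining identity \eqref{condi} for $H(x,y)=y-f(x)$ by a direct computation that factors the Lie derivative as $-\bigl(f'(x)+\tfrac cd\bigr)\bigl(y-f(x)\bigr)$, exhibiting the cofactor $-\bigl(f'(x)+\tfrac cd\bigr)$. Your added remarks on the degree of the cofactor and the alternative flow-invariance argument are correct but not needed.
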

\begin{proof}
Consider the algebraic curve $H(x,y)=y-f(x)$ and $r=c/d.$ Then
\begin{align*}
y\dfrac{\partial H(x,y)}{\partial x}&+\left(-\dfrac c d y +
f(x)\big(f'(x)+\dfrac c d\big)\right)\dfrac{\partial H(x,y)}{\partial y}\\
=& -yf'(x)+\left(-\dfrac c d y + f(x)\big(f'(x)+\dfrac cd
\big)\right)\\=& -\left(f'(x)+\dfrac
cd\right)(y-f(x))=-\left(f'(x)+\dfrac cd\right)H(x,y).
\end{align*}
Hence the result follows.
\end{proof}

As a corollary of this  lemma and the results of the previous
section we have:
\begin{corollary}\label{corol} The solutions of the polynomial ordinary differential
equation $p(U(s),U'(s))=U'(s)-f(U(s))=0$, with adequate boundary
conditions, give the  algebraic TWS  of equation~\eqref{edp-new},
 $u(x,t)=U(x-ct)$ with speed $c=rd$.
\end{corollary}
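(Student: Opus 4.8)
The plan is to read this off from the preceding Lemma together with Theorem~\ref{t0:main} applied with $n=2$. First I would record the reduction: substituting $u(x,t)=U(x-ct)$ into~\eqref{edp-new} gives $-cU'=-d\,f(U)(f'(U)+r)+dU''$, that is $U''=-\frac cd U'+f(U)(f'(U)+r)$, so that the associated first order system is exactly~\eqref{pla} with $(x,y)=(U,U')$. When $r=c/d$, equivalently $c=rd$, the Lemma says that $g(x,y):=y-f(x)$ defines an invariant algebraic curve of~\eqref{pla}; in the planar setting this is precisely an invariant algebraic curve in the sense required by Theorem~\ref{t0:main}.

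Next I would analyse the flow restricted to this curve and locate the relevant critical points. Along $\{y=f(x)\}$ the equation $x'=y$ becomes $x'=f(x)$, so every orbit lying in the curve is, in the coordinate $x=U$, a solution of $U'=f(U)$, i.e.\ of $p(U,U')=U'-f(U)=0$. The critical points of~\eqref{pla} are the points $(x_0,0)$ with $f(x_0)(f'(x_0)+r)=0$; since on the curve $y=f(x)$ one has $y=0$ if and only if $f(x)=0$, the critical points lying on the curve are exactly the $(x_0,0)$ with $f(x_0)=0$ (a zero of $f'+r$ that is not a zero of $f$ does not sit on the curve). Assuming $f$ has at least two real zeros, pick two consecutive ones $a<b$; then the arc of $\{y=f(x)\}$ joining $(a,0)$ to $(b,0)$ carries no further critical point, because $f$ does not vanish on $(a,b)$.

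Then I would invoke the ``if'' direction of Theorem~\ref{t0:main}: equation~\eqref{edp-new} has an algebraic TWS of speed $c=rd$, namely $u(x,t)=U(x-ct)$ where $U$ is the solution of $U'=f(U)$ carried by that arc, with the ``adequate boundary conditions'' $\lim_{s\to-\infty}U(s)=a$ and $\lim_{s\to+\infty}U(s)=b$ (with the roles of $a$ and $b$ interchanged if $f<0$ on $(a,b)$). I would check that this $U$ is indeed a non-constant function defined on all of $\R$: since $f$ has constant sign on $(a,b)$ and vanishes at both endpoints, the solution through an interior point is monotone and bounded, hence global, and its limits at $\pm\infty$ are forced to be the equilibria $a$ and $b$ of $U'=f(U)$. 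Conversely, any orbit of~\eqref{pla} contained in $\{y-f(x)=0\}$ projects to a solution of $p(U,U')=0$, which completes the correspondence.

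This is essentially a bookkeeping argument; the only mildly delicate points are the identification of the critical points that actually lie \emph{on} the invariant curve (needed to verify the ``no other critical points between them'' hypothesis of Theorem~\ref{t0:main}) and the global existence of the heteroclinic solution of the reduced scalar equation $U'=f(U)$. I would not claim that these exhaust the algebraic TWS of~\eqref{edp-new} at speed $rd$; such a converse would require classifying all invariant algebraic curves of~\eqref{pla}, which is not attempted here.
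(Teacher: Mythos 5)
Your proposal is correct and follows exactly the route the paper intends: the paper states the corollary as an immediate consequence of the Lemma and Theorem~\ref{t0:main} without writing out a proof, and your argument is precisely that implicit combination, with the useful extra care of verifying the ``no other critical points between them'' hypothesis on the curve $\{y=f(x)\}$ and the global existence of the heteroclinic solution of $U'=f(U)$. Your closing caveat that this does not classify \emph{all} algebraic TWS at speed $rd$ matches the paper's own remark at the end of the subsection.
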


Let us  apply this corollary to find algebraic TWS for the partial
differential equations:
\begin{align}
\frac{\partial u}{\partial
t}&=a(u-u_1)(u_2-u)(u-u_3)+d\frac{\partial^2 u}{\partial
x^2},\label{edp1}
\\
\frac{\partial u}{\partial t}&=u^{q+1}(1-u^q)+\frac{\partial^2
u}{\partial x^2},\label{edp2}
\end{align}
 where $a>0, d>0$, $u_1<u_2<u_3$ are given real constants and
$q\in\N^+.$

 Equation~\eqref{edp1}  is the Nagumo equation. Notice that it is
of the type~\eqref{edp-new} since the following equality holds
\begin{equation*}
a(u-u_1)(u_2-u)(u-u_3)=-d f(u)\Big(
f'(u)+\sqrt{\frac{a}{2d}}(u_1-2u_2+u_3)\Big),
\end{equation*}
where $f(u)=\sqrt{\frac{a}{2d}}(u-u_1)(u-u_3).$ Hence, using
Corollary \ref{corol}, we obtain that, when
\[
c=\sqrt{\frac{ad}2}(u_1-2u_2+u_3),
\]
equation \eqref{edp1} has the algebraic TWS,  $u(x,t)=U(x-ct)$,
where $U$ satisfies
\[
U'(s)=\sqrt{\frac{a}{2d}}(U(s)-u_1)(U(s)-u_3),
\]
which is a logistic equation. Its non-constant solutions that are
defined for all~$s$ are
\[
U(s)=\frac{u_3+k u_1 e^{\alpha(u_3-u_1)
s}}{1+ke^{\alpha(u_3-u_1)s}},\quad\mbox{with}\quad k>0 \quad
\mbox{and}\quad \alpha=\sqrt{\frac{a}{2d}}.
\]

 Similarly, we have the equality
\[
u^{q+1}(1-u^q)=-f(u)\Big(f'(u)+\frac1{\sqrt{q+1}}\Big),
\]
where $f(u)=\frac1{\sqrt{q+1}}u(u^q-1)$.  Applying again Corollary
\ref{corol},  with $d=1$, we obtain that when
\[
c=\frac1{\sqrt{q+1}},
\]  equation \eqref{edp2} has the algebraic TWS,  $u(x,t)=U(x-ct)$, where $U$ satisfies
\[
U'(s)=\frac1{\sqrt{q+1}}U(s)(U^q(s)-1).
\]
Its non-constant solutions that are defined for all $s$ are
\[
U(s)=\left(1+ k e^{\frac{q}{\sqrt{q+1}}s} \right)^{-\frac
1q},\quad\mbox{with}\quad k>0.
\]

We remark that studying all the invariant algebraic curves of the
planar system~\eqref{pla} we could know whether the corresponding
partial differential equation~\eqref{edp-new}  does or does not have
algebraic TWS with speed different from $rd.$

\section{Algebraic invariant curves for
system~\eqref{main2}}\label{qqss}

This section is devoted to  the proof of Theorem~\ref{teo:iac}. We
need some preliminary results. The first one collects some
well-known properties of the Gamma function, and also relates it
with the Pochhammer symbol, $x^{[m]}:=x(x+1)(x+2)\cdots (x+m-1).$

\begin{lemma}\label{gamma} For $x,y\in\R$ and $p,q,m\in\N$,
\begin{enumerate}[(i)]
\item  $\Gamma(x+1)=x\Gamma(x),$

\item $\prod_{j=p}^q(x+j)=\dfrac{\Gamma(x+q+1)}{\Gamma(x+p)},$

\item $\sum _{j=0}^{m}{ m\choose j}\Gamma  \left(x+ j\right) \Gamma \left(y+ m
-j \right)={\dfrac {\Gamma  \left( x \right) \Gamma  \left( y
\right) \Gamma
 \left( x+y+m \right) }{\Gamma  \left( x+y \right) }},$

\item $\sum _{j=0}^{m}{m\choose j}(m-j)\Gamma  \left(x+ j \right) \Gamma  \left(y+ m
-j \right)={\dfrac {m y\Gamma  \left( x \right) \Gamma  \left( y
\right) \Gamma
 \left(x+y+m \right) }{(x+y)\Gamma  \left( x+y\right) }},$

\item $\dfrac{\Gamma(x+m)}{\Gamma(x)}=x^{[m]}.$

\end{enumerate}

\end{lemma}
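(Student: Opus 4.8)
The plan is to prove the five identities in order, since the later ones can lean on the earlier ones. Part (i) is the fundamental functional equation $\Gamma(x+1)=x\Gamma(x)$, which I would take as known (it follows from integration by parts in the Euler integral, or may be cited). Part (ii) is then an immediate telescoping consequence: writing $\prod_{j=p}^q(x+j)=(x+p)(x+p+1)\cdots(x+q)$ and applying (i) repeatedly in the form $(x+j)\Gamma(x+j)=\Gamma(x+j+1)$, the product collapses to $\Gamma(x+q+1)/\Gamma(x+p)$. Part (v), $\Gamma(x+m)/\Gamma(x)=x^{[m]}$, is the special case $p=0$, $q=m-1$ of (ii) once one recognizes that $x^{[m]}=x(x+1)\cdots(x+m-1)=\prod_{j=0}^{m-1}(x+j)$; so I would dispatch (v) right after (ii) rather than at the end.

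The substantive steps are (iii) and (iv). For (iii), $\sum_{j=0}^m \binom{m}{j}\Gamma(x+j)\Gamma(y+m-j) = \dfrac{\Gamma(x)\Gamma(y)\Gamma(x+y+m)}{\Gamma(x+y)}$, the natural approach is to use the Beta integral $B(x,y)=\int_0^1 t^{x-1}(1-t)^{y-1}\,dt = \Gamma(x)\Gamma(y)/\Gamma(x+y)$. Rewrite $\Gamma(x+j)\Gamma(y+m-j) = \Gamma(x+y+m)\, B(x+j,\,y+m-j)$ so the sum becomes $\Gamma(x+y+m)\sum_{j=0}^m\binom{m}{j}\int_0^1 t^{x+j-1}(1-t)^{y+m-j-1}\,dt$; pull the sum inside the integral and apply the binomial theorem to $\sum_j \binom{m}{j} t^j(1-t)^{m-j} = (t+(1-t))^m = 1$. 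What remains is $\Gamma(x+y+m)\int_0^1 t^{x-1}(1-t)^{y-1}\,dt = \Gamma(x+y+m)\,\Gamma(x)\Gamma(y)/\Gamma(x+y)$, which is the claim. (For this to be a legitimate integral identity one wants $x,y>0$; the general case for $x,y\in\R$ away from poles then follows by analytic continuation, or one can instead give a purely algebraic proof of (iii) by the Vandermonde-type convolution using (v) to reduce to Pochhammer symbols — I would mention this as the alternative so the $x,y\in\R$ statement is fully justified.)

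For (iv), $\sum_{j=0}^m \binom{m}{j}(m-j)\Gamma(x+j)\Gamma(y+m-j) = \dfrac{m\,y\,\Gamma(x)\Gamma(y)\Gamma(x+y+m)}{(x+y)\Gamma(x+y)}$, I would reduce it to (iii). Use $(m-j)\Gamma(y+m-j) = (y+m-j)\Gamma(y+m-j) - y\,\Gamma(y+m-j) = \Gamma(y+m-j+1) - y\,\Gamma(y+m-j)$. The first piece, $\sum_j\binom{m}{j}\Gamma(x+j)\Gamma((y+1)+(m-j))$, is an instance of (iii) with $y$ replaced by $y+1$, giving $\Gamma(x)\Gamma(y+1)\Gamma(x+y+1+m)/\Gamma(x+y+1)$; simplify with (i). The second piece is $-y$ times (iii) itself. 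Combining the two and simplifying the Gamma ratios via (i) yields the stated right-hand side; the only care needed is the bookkeeping of the $\Gamma(x+y)$, $\Gamma(x+y+1)$, $\Gamma(x+y+m)$, $\Gamma(x+y+m+1)$ factors, which collapse to the factor $m y/(x+y)$. The main obstacle is nothing conceptual but precisely this algebraic simplification in (iv) (and making sure the Beta-integral argument in (iii) is stated at a level of generality matching the $x,y\in\R$ hypothesis); both are routine once organized as above.
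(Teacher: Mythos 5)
The paper states this lemma without proof, presenting the five identities as well-known facts about the Gamma function, so there is no argument of the authors' to compare against; your proposal supplies a proof where the paper has none. Your argument is correct and complete. Parts (i), (ii), (v) are handled exactly as one should: (ii) telescopes via (i), and (v) is the case $p=0$, $q=m-1$ of (ii). For (iii), the Beta-integral computation
\[
\sum_{j=0}^{m}\binom{m}{j}\Gamma(x+j)\Gamma(y+m-j)=\Gamma(x+y+m)\int_0^1 t^{x-1}(1-t)^{y-1}\Bigl(\sum_{j=0}^m\binom{m}{j}t^j(1-t)^{m-j}\Bigr)dt=\Gamma(x+y+m)\,B(x,y)
\]
is valid, and you rightly note that the integral representation requires $x,y>0$, with the general statement following by analytic continuation or, equivalently, by dividing through by $\Gamma(x)\Gamma(y)$ and invoking the Chu--Vandermonde convolution $\sum_j\binom{m}{j}x^{[j]}y^{[m-j]}=(x+y)^{[m]}$ (which is all that is needed for the paper's application, where $x,y\in\{\tfrac12,\tfrac13\}$). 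For (iv), the splitting $(m-j)\Gamma(y+m-j)=\Gamma(y+m-j+1)-y\,\Gamma(y+m-j)$ reduces the sum to two instances of (iii), and the resulting coefficient is $\frac{y(x+y+m)}{x+y}-y=\frac{my}{x+y}$, exactly the claimed factor. No gaps.
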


 The next results  reduce the set of possible invariant curves and
cofactors.

\begin{proposition}\label{pro:odd} If the quadratic system \eqref{main2} has an
irreducible invariant algebraic curve of degree $n$,  then its
cofactor $k(x,y)$ must be constant, i.e. $k(x,y)\equiv c_0$, and its
degree  has to be even.
\end{proposition}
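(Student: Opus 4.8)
The plan is to exploit the structure of the cofactor equation~\eqref{condi} for system~\eqref{main2}, where $P(x,y)=-y$ and $Q(x,y)=-x-cy+x^2$ both have degree $\le 2$, so any cofactor $k(x,y)$ has degree $\le 1$; write $k(x,y)=c_0+c_1x+c_2y$. First I would argue that $k$ cannot depend on $y$: if $f$ has degree $n$, write $f=f_n+f_{n-1}+\cdots$ with $f_j$ the homogeneous part of degree $j$, and extract the top-degree part of~\eqref{condi}. The terms of degree $n+1$ come from $-y\,\partial_x f_n$, $(-x+x^2)\partial_y f_n$ hitting only through $x^2\partial_y f_n$, and $(c_1x+c_2y)f_n$; collecting the genuine degree-$n+1$ piece gives $x^2\partial_y f_n - (c_1x+c_2y)f_n=0$ (the $-y\partial_x f_n$ term is only degree $n$). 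This is a first-order linear PDE for the homogeneous polynomial $f_n$; solving it along characteristics forces $c_2=0$ and constrains $c_1$, and in fact compatibility with $f_n$ being a polynomial will force $c_1=0$ as well, so $k\equiv c_0$. I would carry out this homogeneous-part analysis carefully, since the quadratic term $x^2$ in $Q$ mixes degrees in a way that needs tracking.

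Once $k\equiv c_0$ is established, the next step is to show $n$ is even. Here I would look at the structure of~\eqref{condi} more globally, most likely by passing to a weighted-homogeneous or Newton-polygon viewpoint adapted to the system: near the relevant critical points the linearization has eigenvalues whose ratio is irrational for generic $c\ge 2$ (the eigenvalues at the origin of~\eqref{main2} are the roots of $\lambda^2+c\lambda+1=0$), and an invariant algebraic curve through such a point must be tangent to an eigendirection with the branch parametrized compatibly. I expect the parity of $n$ to emerge from a symmetry or from matching the behavior of $f$ at the two critical points $(0,0)$ and $(1,0)$ together with the leading behavior at infinity dictated by the $c_0$-cofactor relation. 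Concretely, the degree-$n$ part of~\eqref{condi}, now reading $-y\,\partial_x f_n + x^2\partial_y f_n + (\text{lower from }f_{n-1}) = c_0 f_n + \cdots$, pins down $f_n$ up to scalar as a specific binomial-type polynomial, and the requirement that the recursion for $f_{n-1},f_{n-2},\dots$ closes consistently (this is where Lemma~\ref{gamma} and the Pochhammer identities are designed to be used) will be obstructed unless $n$ is even.

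The main obstacle will be the second part — proving parity — because it is not a purely local statement: it requires showing that the descending recursion $f_n \to f_{n-1} \to \cdots \to f_0$ produced by expanding~\eqref{condi} degree by degree is consistent if and only if $n$ is even. I anticipate that the cleanest route is to solve the recursion explicitly in closed form using the Gamma-function identities of Lemma~\ref{gamma}, obtaining an expression for the coefficients of $f$ in terms of binomial sums, and then observe that a certain denominator or a certain alternating sum vanishes (making the putative $f$ fail to be polynomial, or fail to vanish at the required critical points) precisely when $n$ is odd. Setting up the right generating-function or Pochhammer bookkeeping so that parts (iii) and (iv) of Lemma~\ref{gamma} apply directly is the delicate part; the rest is bounded-effort computation.
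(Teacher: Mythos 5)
Your first half (constancy of the cofactor) is a workable plan and in fact takes a genuinely different route from the paper: the paper expands $f$ in powers of $y$ as in \eqref{fn}, obtains $c_2h_n+h_n'=0$ from the coefficient of $y^{n+1}$ in \eqref{condi2} to kill $c_2$, and only later excludes $c_1\ne0$ via a degree count on the recursion \eqref{edos}. Your top total-degree identity $x^2\partial_yF_N-(c_1x+c_2y)F_N=0$ does force $c_1=c_2=0$ in one stroke: writing $F_N=x^aG$ with $x\nmid G$, either $G$ is constant (so $\partial_yF_N=0$ and the identity reads $(c_1x+c_2y)Cx^N=0$), or $x\nmid\partial_yG$ and the identity forces $x^2\mid(c_1x+c_2y)$, which is impossible; either way $c_1=c_2=0$ and $F_N=Cx^N$. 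You should replace the appeal to ``solving along characteristics'' by this explicit divisibility argument, but the idea is sound and arguably cleaner than the paper's for this step.

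The parity half is where the proposal has a genuine gap, for two reasons. First, in this section ``degree $n$'' is the degree of $f$ in $y$, not the total degree: Proposition~\ref{pro:h0h1} gives $\deg h_0=3m$ with $n=2m$, and the curve \eqref{iac} ultimately found has total degree $3$, which is odd. Your homogeneous decomposition by total degree is therefore aimed at the wrong invariant --- ``total degree is even'' is false for the actual curve, so the program as you describe it cannot close. Second, the mechanism you conjecture for the obstruction (a vanishing denominator or alternating sum in a Gamma-function computation, eigenvalue ratios, matching at the two critical points) is not argued and is not what happens. The paper's obstruction is an elementary degree count: with $k\equiv c_0$ the coefficients $h_j(x)$ satisfy the descending recursion \eqref{edos}, which gives $\deg h_{n-2k}=3k$ exactly (nonzero leading coefficient) and $\deg h_{n-(2k+1)}\le 3k+1$; the recursion terminates in the $y^0$-identity \eqref{edo0}, namely $x(x-1)h_1(x)=c_0h_0(x)$, and for $n$ odd the term $x^2h_1$ has degree $(3n+1)/2$, strictly larger than both $\deg(xh_1)$ and $\deg h_0\le(3n-1)/2$, so its leading term cannot cancel. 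Lemma~\ref{gamma} plays no role in the parity argument; it is used only in Proposition~\ref{pro:h0h1} to compute leading coefficients in the even case. As written, your second part is a list of candidate strategies rather than a proof, and the one you commit to most concretely targets a statement that is not true.
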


\begin{proof} Since the system \eqref{main2} is quadratic ($N=2$),
the cofactor of an invariant algebraic curve of degree $n$,
$f_n(x,y)=0$, with
\begin{equation}\label{fn}
f_n(x,y)=h_n(x)y^n+h_{n-1}(x)y^{n-1}+\cdots+h_1(x)y+h_0(x),
\end{equation}
is linear, i.e $k(x,y)=c_0+c_1x+c_2y.$ Then, equation \eqref{condi}
writes as
\begin{equation}\label{condi2}
-y\dfrac{\partial f_n(x,y)}{\partial x}+(-x-cy+x^2)\dfrac{\partial
f_n(x,y)}{\partial y}-(c_0+c_1x+c_2y)f_n(x,y)=0.
\end{equation}

Imposing that the higher order  term in $y$ of the above equation
vanishes we get the differential equation
\[
c_2 h_n(x)+h'_n(x)=0.
\]
Since $h_n$ has to be a polynomial we obtain that $c_2=0$ and that
$h_n(x)$ is a constant. Hence, without loss of generality, we can
assume that $h_n(x)\equiv1.$ Then, equality \eqref{condi2} is
equivalent to the following set of linear differential equations
\begin{equation}\label{edos}
h'_{j-2}(x)=jx(x-1)h_j(x)-((j-1)c+c_0+c_1x)h_{j-1}(x), \quad
j=n+1,n,\ldots,2,1,
\end{equation}
where $h_n(x)\equiv 1$ and $h_{n+1}(x)\equiv h_{-1}(x)\equiv0.$

If $c_1\ne0$, using \eqref{edos} we can obtain the degrees of the
functions $h_j.$ They are:
\[\deg(h_{n-k})=2k,\quad k=0,1,\ldots,n-1,n.\]
In particular $\deg(h_1)=2n-2$ and $\deg(h_0)=2n.$ From
\eqref{edos}, for $j=1$, we obtain that
\begin{equation}\label{edo0}
-c_0h_0(x)-c_1x h_0(x)-xh_1(x)+x^2h_1(x)=0.
\end{equation}
Studying the higher order terms in $x$ of this equation we get that
 relation~\eqref{edo0}  can never be satisfied. As a consequence
$c_1=0$ and so $k(x,y)=c_0$ as we wanted to prove.

Consider now equation  \eqref{edos} with $c_1=0$. Assume, to arrive
to a contradiction, that $n$ is odd. Studying again the degrees of
the functions $h_j$ we get that
\[
\deg(h_{n-2k})=3k\quad\mbox{and}\quad \deg(h_{n-(2k+1)})\le 3k+1
,\quad k=0,1,\ldots (n-1)/2.
\]
In particular, $\deg(h_0)\le (3n-1)/2$ and $\deg(h_1)=3(n-1)/2.$
Again, as in the case $c_1\ne0$, the higher order terms in $x$
corresponding to equation \eqref{edo0} can not cancel. Therefore $n$
must be even, as we wanted to prove.
\end{proof}

\begin{proposition}\label{pro:h0h1} Let
\begin{equation*}
f_n(x,y)=h_n(x)y^n+h_{n-1}(x)y^{n-1}+\cdots+h_1(x)y+h_0(x)=0
\end{equation*}
be an irreducible invariant algebraic curve of system \eqref{main2}
with even degree, $n=2m$. Then
\begin{align}
h_0(x)&=\Big(\dfrac23\Big)^{m} x^{3m}+O\big(x^{3m-1}\big),\label{h0}\\
h_1(x)&=\dfrac15\Big(\dfrac23\Big)^{m}\left(5c_0-\big(5c_0+6mc\big)\frac{\left(\frac56\right)^{[m]}}{
 \left(\frac13\right)^{[m]}}\right) x^{3m-2}+O\big(x^{3m-3}\big),\label{h1}
\end{align}
where $x^{[m]}=x(x+1)(x+2)\cdots (x+m-1)$.
\end{proposition}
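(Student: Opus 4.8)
The plan is to exploit the recursion \eqref{edos} with $c_1 = 0$ (legitimate by Proposition~\ref{pro:odd}) to pin down the top-degree coefficients of $h_0$ and $h_1$ explicitly. First I would record the degree pattern established in the proof of Proposition~\ref{pro:odd} for the even case $n = 2m$: from \eqref{edos} one reads off $\deg(h_{n-2k}) = 3k$ and $\deg(h_{n-2k-1}) = 3k+1$ for $k = 0,1,\ldots,m$, so that $\deg(h_0) = 3m$ and $\deg(h_1) = 3m-2$; in particular the degree drop from $h_1$ to $h_0$ is exactly $2$, which will later force the leading coefficient of the ``$x^2 h_1$'' contribution. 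The idea is to track, for each $j$ going down from $n$, only the leading coefficient of $h_{n-j}$, call it $a_{n-j}$, as a function of $m$, $c$ and $c_0$.

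The computation splits naturally into the two parities. For the even-index chain $h_n, h_{n-2}, \ldots, h_0$: normalizing $h_n \equiv 1$, I would apply \eqref{edos} at the even values of $j$ where the dominant term on the right-hand side is $jx(x-1)h_j(x) \sim j\, a_j\, x^{\deg h_j + 2}$, integrate to get the leading coefficient of $h_{j-2}$, and obtain a product telescoping over $k = 0,\ldots,m-1$. Using Lemma~\ref{gamma}(ii) and (v) to convert the resulting product of linear factors into a ratio of Gamma functions (equivalently a Pochhammer ratio), this should collapse to $h_0(x) = (2/3)^m x^{3m} + O(x^{3m-1})$, which is \eqref{h0}. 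The constant $2/3$ arises because each pass through two steps of \eqref{edos} contributes a factor whose product over $m$ steps is $\prod (\text{something})$ that simplifies to $(2/3)^m$; this is the routine but bookkeeping-heavy part.

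For \eqref{h1} I would instead follow the odd-index chain $h_{n-1}, h_{n-3}, \ldots, h_1$. Here \eqref{edos} couples $h_{j-2}$ to both $h_j$ (through $jx(x-1)h_j$) and $h_{j-1}$ (through the lower-order term $-((j-1)c + c_0)h_{j-1}$); crucially the $h_{j-1}$ term, although of lower degree in the generic step, contributes at the \emph{same} top degree at the very end of the chain because of the degree pattern noted above, so it must be retained. Tracking the leading coefficient of $h_1$ therefore produces a sum of two telescoping products: one ``homogeneous'' piece proportional to $c_0$ coming purely from the $jx(x-1)h_j$ terms along the odd chain, and one piece proportional to $(5c_0 + 6mc)$ picked up from the single coupling to the even chain (which is where the already-computed $(2/3)^m$ factor and the Pochhammer ratio $(\tfrac56)^{[m]}/(\tfrac13)^{[m]}$ enter). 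Assembling these via Lemma~\ref{gamma}(ii),(v) and simplifying the rational prefactor to $\tfrac15$ yields \eqref{h1}. The main obstacle I anticipate is precisely this bookkeeping in the odd chain: correctly identifying which lower-order terms in \eqref{edos} feed back into the top-degree coefficient of $h_1$ (a single term, at the chain's end, but easy to miss), and then matching the two Gamma-function products so that the shifts $\tfrac56$ and $\tfrac13$ come out exactly — everything else is a determined but mechanical unwinding of the recursion.
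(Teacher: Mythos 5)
Your treatment of \eqref{h0} is sound and matches the paper: the even-indexed chain $h_{2m},h_{2m-2},\ldots,h_0$ is self-contained at top degree, and the telescoping product of the factors $\frac{2m-2k+2}{3k}$ gives $(2/3)^m$ (the paper even records the intermediate values $a_{2m-2j}=\binom{m}{j}(2/3)^j$). The problem is in your plan for \eqref{h1}. You assert that the coupling term $-((j-1)c+c_0)h_{j-1}$ in \eqref{edos} is of lower degree in the generic step and only matters ``at the very end of the chain'', so that the leading coefficient of $h_1$ is a sum of just two telescoping products. That is false: for the step that produces $h_{2m-(2k+1)}$ one has $j=2m-2k+1$, so $j-1=2m-2k$ belongs to the even chain, and $\deg\bigl(x(x-1)h_{2m-(2k-1)}\bigr)=3k=\deg\bigl(h_{2m-2k}\bigr)$. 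Hence the even chain injects into the top-degree coefficient of the odd chain at every one of the $m$ steps, not once. The leading coefficient of $h_1$ is therefore a convolution sum with one term per entry point, e.g.
\begin{equation*}
\widehat a_1(2m)=\sum_{j=0}^{m-1}\binom{m}{j}\Bigl(\tfrac23\Bigr)^{j}\prod_{k=0}^{m-j-1}(2k+1)\prod_{k=j}^{m-1}\tfrac1{3k+1},
\end{equation*}
together with its companion for the $c$-part. Your two-product ansatz would drop $m-1$ of these contributions and give a wrong coefficient already for $m=2$.

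A second, related omission: to collapse such convolution sums into the Pochhammer ratio $(\tfrac56)^{[m]}/(\tfrac13)^{[m]}$ you need the Vandermonde-type identities (iii) and (iv) of Lemma~\ref{gamma} (the weighted identity (iv) is precisely what produces the factor $6mc$ and the prefactor $\tfrac15$); items (ii) and (v), which are all you invoke, only rewrite individual products as Gamma ratios and cannot evaluate the sum. Finally, the decomposition is not ``a $c_0$-piece from the odd chain plus a $(5c_0+6mc)$-piece from one coupling'': both $c_0$ and $c$ enter through every coupling factor $-(c_0+(2m-2k)c)$ and through the initial value $a_{2m-1}=-(c_0+2mc)$; the paper separates them by linearity in the data, writing $a_1(2m)=-\bigl(\widehat a_1(2m)c_0+\widetilde a_1(2m)c\bigr)$, and only after applying Lemma~\ref{gamma}(iii)--(iv) does the combination $5c_0+6mc$ emerge.
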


\begin{proof}
We proceed as in the proof of Proposition \ref{pro:odd}. The
coefficients $h_j$ of $f_n$ must satisfy the differential equations
\eqref{edos}, with $c_1=0.$ Arguing as in that proof we obtain the
degrees of each $h_j$. We can write
\begin{equation*}\label{aj}
h_j(x)=a_j(2m) x^{\deg(h_j)}+O\big(x^{\deg(h_j)-1}\big),
\end{equation*}
where,
\[
 \deg(h_j)=\begin{cases} 3k-2 ,\mbox{ when } j=2m-(2k-1),\\
 3k ,\mbox{ when } j=2m-2k,\end{cases}
\]
 for $k=0,1,\ldots, m$ and $a_{2m}(2m)=1.$ Let us determine these functions.

Plugging the above expressions in \eqref{edos} we obtain that the
terms $a_j=a_j(2m)$ satisfy the following recurrences
\begin{align}
a_{2m-2k}&=\dfrac{2m-(2k-2)}{3k} a_{2m-(2k-2)},\quad k=1,2,\ldots,m,\label{receven}\\
a_{2m-(2k+1)}&=\dfrac{(2m-(2k-1))a_{2m-(2k-1)}+h(2m-2k)a_{2m-2k}}{3k+1},\,
k=1,2,\ldots,m-1,\label{recodd}
\end{align}
where $h(j)=-(c_0+jc)$ and the initial conditions are
\[
a_{2m}=1\quad\mbox{and}\quad a_{2m-1}=h(2m)=-(c_0+2mc).
\]
The even terms $a_{2j}$ can be easily obtained from \eqref{receven}.
We get
\begin{equation}\label{even}
a_{2m-2j}={m \choose j}\Big(\dfrac23\Big)^{j}
\end{equation}
and in particular $a_0=(2/3)^m$ as we wanted to prove.  It remains
to  obtain the general expression of the last odd term
$a_1=a_1(2m)$. We take advantage of the linearity of the problem
with respect to the initial condition $a_{2m-1}$ and  write
\[
a_1(2m)=-\big(\widehat a_1(2m)c_0+\widetilde a_1(2m) c\big),
\]
where $\widehat a_1$ and $\widetilde a_1$ are the solution of the
recurrences  \eqref{receven}-\eqref{recodd} with  initial conditions
$a_{2m}=1$ and
\[
a_{2m-1}=1\quad\mbox{or}\quad a_{2m-1}=2m,
\]
respectively.

 Substituting  expression \eqref{even} in
\eqref{recodd} and developing the recurrent expressions we arrive at
\begin{align*}
\widehat a_1(2m)&=\sum_{j=0}^{m-1}{m\choose
j}\Big(\dfrac23\Big)^{j}\prod_{k=0}^{m-j-1}(2k+1)\prod_{k=j}^{m-1}\dfrac1{3k+1},
\\
\widetilde a_1(2m)&=2\sum_{j=0}^{m}(m-j){m\choose
j}\Big(\dfrac23\Big)^{j}\prod_{k=0}^{m-j-1}(2k+1)\prod_{k=j}^{m-1}\dfrac1{3k+1}.
\end{align*}
 We  introduce the following  auxiliary
functions
\begin{align*}
\alpha(m)&=\Gamma\Big(\frac12\Big)\Gamma\Big(\frac13+m\Big),\quad
\beta(m)=\dfrac{\Gamma(\frac12)\Gamma(\frac13)\Gamma(\frac56+m)}{\Gamma(\frac56)},\\
\gamma(m)&=\dfrac{\beta(m)}{\alpha(m)}=
\frac{\Gamma(\frac13)\Gamma(\frac56+m)}{\Gamma(\frac56)\Gamma(\frac13+m)}=\frac{\left(\frac56\right)^{[m]}}{
 \left(\frac13\right)^{[m]}}\,,
\end{align*}
where in the last equality we have used (v) of Lemma~\ref{gamma}.
Let us simplify the expressions of $\widehat a_1$ and $\widetilde
a_1$ using the above functions and the other equalities given in
Lemma~\ref{gamma}.

\begin{align*}
\widehat a_1(2m)&=\Big(\dfrac23\Big)^{m}\sum_{j=0}^{m-1}{m\choose
j}\prod_{k=0}^{m-j-1}\Big(\frac12+k\Big)\prod_{k=j}^{m-1}\dfrac1{\frac13+k}
\\&=\dfrac1{\alpha(m)}\Big(\dfrac23\Big)^{m}\sum_{j=0}^{m-1}{m\choose
j}\Gamma\Big(\frac12+m-j\Big)\Gamma\Big(\frac13+j\Big)\\
&=\dfrac1{\alpha(m)}\Big(\dfrac23\Big)^{m}\left(\sum_{j=0}^{m}{m\choose
j}\Gamma\Big(\frac12+m-j\Big)\Gamma\Big(\frac13+j\Big)-\Gamma\Big(\frac12\Big)\Gamma\Big(\frac13+m\Big)\right)\\
&=\dfrac1{\alpha(m)}\Big(\dfrac23\Big)^{m}\left(
\dfrac{\Gamma(\frac12)\Gamma(\frac13)\Gamma(\frac56+m)}{\Gamma(\frac56)}
-\alpha(m)\right)\\
&=\dfrac1{\alpha(m)}\Big(\dfrac23\Big)^{m}\left(\beta(m)-\alpha(m)\right)=
\Big(\dfrac23\Big)^{m}\left(\gamma(m)-1\right).
\end{align*}
Similarly,

\begin{align*}
\widetilde
a_1(2m)&=\dfrac2{\alpha(m)}\Big(\dfrac23\Big)^{m}\sum_{j=0}^{m}(m-j){m\choose
j}\Gamma\Big(\frac12+m-j\Big)\Gamma\Big(\frac13+j\Big)\\
&=\dfrac2{\alpha(m)}\Big(\dfrac23\Big)^{m}
\dfrac{\frac12\Gamma(\frac12)\Gamma(\frac13)\Gamma(\frac56+m)}{\frac56\Gamma(\frac56)}m\\
&=\dfrac1{\alpha(m)}\Big(\dfrac23\Big)^{m}\frac65\beta(m)m=\Big(\dfrac23\Big)^{m}\frac65\gamma(m)m .\\
\end{align*}
Hence
\begin{align*}
a_1(2m)&=-\big(\widehat a_1(2m)c_0+\widetilde
a_1(2m)c\big)\\&=-\Big(\dfrac23\Big)^{m}\left((\gamma(m)-1)c_0+\frac65\gamma(m)mc
\right)
\\&=\dfrac15\Big(\dfrac23\Big)^{m}\Big(5c_0-\big(5c_0+6mc\big)
\gamma(m)\Big)
\\
&=\dfrac15\Big(\dfrac23\Big)^{m}\left(5c_0-\big(5c_0+6mc\big)\frac{\left(\frac56\right)^{[m]}}{
 \left(\frac13\right)^{[m]}}\right),
\end{align*}
as we wanted to prove.
\end{proof}

When an invariant algebraic curve passes by an elementary critical
point, in many cases, the value of the cofactor at this point can be
obtained. These type of results, based on previous works of
Seidenberg (\cite{se}), are proved in~\cite{maite}.  In the next
proposition, which is included in \cite[Thm 14]{maite}, we  state
one of these cases.

\begin{proposition}\label{cof} Let $f(x,y)=0$ be an invariant algebraic
curve of a planar system  with corresponding cofactor $k(x,y)$.
Assume that it  contains a critical point of the system,
$(x_0,y_0)$, and that it is  a hyperbolic saddle with eigenvalues
$\lambda^-<0<\lambda^+.$ Then $k(x_0,y_0)\in\{
\lambda^+,\lambda^-,\lambda^++\lambda^- \}.$
\end{proposition}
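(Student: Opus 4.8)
The plan is to normalize the singular point, determine $k(x_0,y_0)$ from a lowest--order computation, and then use the local structure of invariant curves at a hyperbolic saddle. First I would translate $(x_0,y_0)$ to the origin and apply a real linear change of variables (a saddle has real eigenvalues, hence a diagonalizable linear part), bringing the system to the form $x'=\lambda^+x+p(x,y)$, $y'=\lambda^-y+q(x,y)$ with $p,q$ of order $\ge2$. Under an affine change of variables $T$ fixing the origin the cofactor transforms as $\widetilde k=k\circ T$, so $k(x_0,y_0)$ is unaffected; and $f$, which we may take irreducible (the case relevant to the applications, and to \cite{maite}), still defines an invariant algebraic curve through the origin with $f(0,0)=0$.

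Write $f=f_m+f_{m+1}+\cdots$ for the decomposition of $f$ into homogeneous parts, with $f_m\neq0$ and $m\ge1$, and $k=k_0+k_1+\cdots$, so that $k_0=k(x_0,y_0)$. The degree--$m$ component of the cofactor identity~\eqref{condi} is the Euler equation
\[
\lambda^+x\,\frac{\partial f_m}{\partial x}+\lambda^-y\,\frac{\partial f_m}{\partial y}=k_0\,f_m .
\]
Writing $f_m=\sum_{i+j=m}c_{ij}x^iy^j$, this forces $(i\lambda^++j\lambda^-)c_{ij}=k_0c_{ij}$ for every pair $(i,j)$; since $\lambda^+>0>\lambda^-$, two monomials of the same total degree $m$ cannot produce the same value $i\lambda^++j\lambda^-$, so $f_m$ is a single monomial $c\,x^iy^j$ and $k_0=i\lambda^++j\lambda^-$.

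It remains to show $(i,j)\in\{(1,0),(0,1),(1,1)\}$, and this is the heart of the matter. As $f_m=c\,x^iy^j$, the tangent cone of $\{f=0\}$ at the origin lies on the coordinate axes, so every local branch of $\{f=0\}$ is tangent to the $\lambda^+$--eigenaxis or to the $\lambda^-$--eigenaxis, and each such branch is invariant (the time--$t$ flow is a local diffeomorphism fixing the origin, preserves $\{f=0\}$, permutes its finitely many branches, and is the identity at $t=0$). But the only germs of invariant analytic curves through a hyperbolic planar saddle are the stable separatrix $W^s$ (tangent to the $\lambda^-$--direction) and the unstable separatrix $W^u$ (tangent to the $\lambda^+$--direction): existence is the analytic stable manifold theorem, and the absence of any others follows by reduction to the linear model via Poincar\'e's linearization theorem when $\lambda^+/\lambda^-$ is irrational (a negative ratio has no resonances) and, in general, is exactly the kind of local statement obtained from Seidenberg's reduction of singularities, as exploited in~\cite{se,maite}. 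Since $f$ is irreducible it is squarefree, so its germ at the origin is reduced and $f$ vanishes to order exactly one along each separatrix it contains; hence, locally, $f=w_s^{\,a}w_u^{\,b}u$ with $u(0,0)\neq0$ and $(a,b)\in\{(1,0),(0,1),(1,1)\}$, where $w_s=x-\psi(y)$, $w_u=y-\phi(x)$ are reduced equations of $W^s$, $W^u$ with $\psi,\phi$ of order $\ge2$. Comparing lowest--degree terms gives $f_m=u(0,0)\,x^ay^b$, so $(i,j)=(a,b)$ and therefore $k(x_0,y_0)=a\lambda^++b\lambda^-\in\{\lambda^+,\lambda^-,\lambda^++\lambda^-\}$.

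The step I expect to be the main obstacle is this last one: excluding branches tangent to non--eigendirections and, in the resonant (rational--ratio) case, branches of higher contact with a separatrix; for this one leans on the classical local analysis of~\cite{se} used in~\cite{maite} rather than on an elementary argument. (Once the factorization $f=w_s^{\,a}w_u^{\,b}u$ is available, an alternative to the Euler computation for reading off $k_0$ is to restrict $f$ to a trajectory on $W^u$, along which it decays like $e^{a\lambda^+s}$ as $s\to-\infty$, and match this with the rate $e^{k_0s}$ dictated by $\tfrac{d}{ds}f=kf$; likewise on $W^s$.)
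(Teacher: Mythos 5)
Your argument is essentially sound, but note first that the paper itself offers no proof of this proposition: it is quoted verbatim from \cite[Thm 14]{maite}, so there is no in-paper argument to compare against. Your reconstruction is a correct and fairly faithful version of what underlies that reference. The two computational steps are fine: the cofactor value $k(x_0,y_0)$ is indeed invariant under the affine normalization, and the lowest-degree homogeneous component of \eqref{condi} does force $f_m=c\,x^iy^j$ with $k_0=i\lambda^++j\lambda^-$, since $(i,j)\mapsto i\lambda^++j\lambda^-$ is injective on $\{i+j=m\}$ when $\lambda^+\neq\lambda^-$. You also correctly identify the genuine crux --- that the only invariant analytic branches through a hyperbolic saddle are the two separatrices, each picked up with multiplicity one because the germ of an irreducible $f$ is reduced --- and you defer exactly this point to Seidenberg's local theory \cite{se}, which is the same input the paper's source relies on; so the proof is not self-contained at that step, but the citation is the appropriate one and the step is classical. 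One small point worth making explicit: the statement is false for non-reduced $f$ (e.g.\ $f=x^2y$ for the linear saddle has cofactor $2\lambda^++\lambda^-$), so the irreducibility you invoke is not merely a convenience but is part of the paper's standing definition of an invariant algebraic curve in Section~\ref{proof}, and should be flagged as a hypothesis rather than an optional reduction.
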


\begin{proof}[Proof of Theorem~\ref{teo:iac}]
  By Propositions~\ref{pro:odd} and~\ref{pro:h0h1}  we know
that the invariant curve  has  even degree $n=2m, m\in\N$, and it
can be written as
\begin{equation*}
f(x,y)=h_n(x)y^n+h_{n-1}(x)y^{n-1}+\cdots+h_1(x)y+h_0(x)=0,
\end{equation*}
where $h_0$ and $h_1$ satisfy \eqref{h0} and \eqref{h1}. Moreover
its cofactor is constant,  $k(x,y)=c_0.$ Using that $h_0$ and $h_1$
must satisfy~\eqref{edo0} we get the identity
\begin{equation*}
-c_0h_0(x)-xh_1(x)+x^2h_1(x)\equiv0.
\end{equation*}
Using Proposition~\ref{pro:h0h1} we obtain that
\begin{equation*}
-c_0h_0(x)-xh_1(x)+x^2h_1(x)=-\frac{5c_0+6mc}5\Big(\dfrac23\Big)^{m}\frac{\left(\frac56\right)^{[m]}}{
 \left(\frac13\right)^{[m]}}x^{3m}+ O\big(x^{3m-1}\big).
\end{equation*}
Therefore, \begin{equation}\label{condii} 5c_0+6mc=0.
\end{equation}

The origin of \eqref{main2} is a saddle point with eigenvalues
$\lambda^\pm=\dfrac{-c\pm\sqrt{c^2+4}}2,$  where
$\lambda^-<0<\lambda^+$. Since, by hypothesis, $f(0,0)=0$ we can
apply Proposition~\ref{cof} to determine $c_0=k(0,0).$ We obtain
that $c_0\in\{\lambda^+,\lambda^-,-c\}.$ When $c_0=-c$,
equation~\eqref{condii} gives  $(6m-5)\,c=0$, which is in
contradiction with the hypothesis $c\ge2.$ Therefore, if the system
has an algebraic invariant curve under the above hypotheses, then
$c_0\in\{\lambda^+,\lambda^-\}.$ Take $c_0=\lambda^\pm$. Hence,
equation~\eqref{condii} writes as $6mc+5\lambda^\pm=0,$ or
equivalently,
\[
c=\mp\frac5{\sqrt{6}}\frac1{\sqrt{m(6m-5)}}.
\]
Imposing that $c\ge2$ we get that the only possibility is
$c_0=\lambda^-$ and $m=1$. Then, $c=5/\sqrt{6}$ as we wanted to
prove. Finally, simple computations give~\eqref{iac} and the theorem
follows.
\end{proof}

\subsection*{Acknowledgement} The first  author is partially supported by MINECO/FEDER grant number
MTM2008-03437 and Generalitat de Catalunya grant number
2009\-SGR410.

\end{document}